\def\multiset#1#2{\ensuremath{\left(\kern-.3em\left(\genfrac{}{}{0pt}{}{#1}{#2}\right)\kern-.3em\right)}}
\newtheorem{theorem}{Theorem}
\newtheorem{lemma}[theorem]{Lemma}
\newtheorem{corollary}[theorem]{Corollary}
\theoremstyle{definition}
\theoremstyle{remark}
\title[Incidences between quadratic subspaces over finite fields]
{Incidences between quadratic subspaces \\over finite fields}
\author{Semin Yoo}
\date{\today}
\address{Department of Mathematics, University of Rochester, Rochester, NY, USA}
\email{syoo19@ur.rochester.edu}
\subjclass[2010]{05C35, 15A63}
\keywords{incidence graphs, quadratic forms over finite fields, spectral graph theory}
\begin{document}

\maketitle

%\doublespacing
\begin{abstract}
Let $\mathbb{F}_{q}$ be a finite field of order $q$, where $q$ is an odd prime power. A quadratic subspace $(W,Q)$ of $(\mathbb{F}_{q}^{n},x_{1}^{2}+x_{2}^{2}+\cdots+x_{n}^{2})$ is called \textbf{dot$_{\mathbf{k}}$-subspace} if $Q$ is isometrically isomorphic to $x_{1}^{2}+x_{2}^{2}+\cdots+x_{k}^{2}$. In this paper, we obtain bounds for the number of incidences $I(\mathcal{K},\mathcal{H})$ between a collection $\mathcal{K}$ of dot$_{k}$-subspaces and a collection $\mathcal{H}$ of dot$_{h}$-subspaces when $h \geq 4k-4$, which is given by
\[\left | I(\mathcal{K},\mathcal{H})-\frac{|\mathcal{K}||\mathcal{H}|}{q^{k(n-h)}}\right |  \lesssim q^{\frac{k(2h-n-2k+4)+h(n-h-1)-2}{2}}\sqrt{|\mathcal{K}||\mathcal{H}|}. \]
In particular, we improve the error term in \cite{PTV} obtained by Phuong, Thang and Vinh (2019) for general collections of affine subspaces in the presence of our additional conditions.
\end{abstract}

\section{Introduction and statements of results}

\smallskip
Let $\mathcal{P}$ be a finite subset of $\mathbb{R}^{2}$, $\mathcal{L}$ a finite subset of lines in $\mathbb{R}^{2}$, and $I(\mathcal{P},\mathcal{L})$ the number of incidences between $\mathcal{P}$ and $\mathcal{L}$. In the field of combinatorial geometry, there is a famous question to ask how large $I(\mathcal{P},\mathcal{L})$ can be. In \cite{ST}, Szemerédi and Trotter (1983) prove that $I(\mathcal{P},\mathcal{L}) \lesssim |\mathcal{P}|^{2/3}|\mathcal{L}|^{2/3}+|\mathcal{P}|+|\mathcal{L}|$, where $x  \lesssim y$ denotes $x \leq cy$ for some constant $c$. 

\smallskip
The study of incidence problems over finite fields has been also conducted by a considerable number of researchers. In \cite{BKT}, Bourgain, Katz, and Tao (2004) show that for a given prime $p$ and $0< \alpha <2$, if $\mathcal{P}$ and $\mathcal{L}$ are sets of points and lines in $\mathbb{F}_{p}^{2}$, respectively, with $|\mathcal{P}|,|\mathcal{L}|<N=p^{\alpha}$, then there exists $\epsilon(\alpha) >0$ such that $I(\mathcal{P},\mathcal{L})\lesssim N^{\frac{3}{2}-\epsilon(\alpha)}$.

\smallskip
In particular, in \cite{PTV}, Phuong, Thang and Vinh (2019) recently obtain bounds for the number of incidences between $k$-planes and $h$-planes in $\mathbb{F}_{q}^{n}$. They obtain the main term and bounds on the error term of incidences by using spectral gap theorem and graph theory. Let $V$ be a subset in $\mathbb{F}_{q}^{n}$. We call $V$ an \textbf{affine} $\mathbf{k}$\textbf{-plane} in $\mathbb{F}_{q}^{n}$, where $k<d$, if there exist $k+1$ vectors $v_{1},v_{2},\cdots,v_{k+1}$ in $\mathbb{F}_{q}^{n}$ such that
\[V=\text{span}\left \{ v_{1},\cdots,v_{k} \right \}+v_{k+1},\quad \text{rank}\left \{ v_{1},\cdots,v_{k} \right \}=k.\]
The statement of the main theorem in \cite{PTV} is as follows.
\begin{theorem}[Phuong, Thang and Vinh '19]\label{ptv}
Let $\mathcal{P}$ be a set of affine $k$-planes and $\mathcal{H}$ be a set of affine $h$-planes in $\mathbb{F}_{q}^{n}$ ($h \geq 2k+1$). Then
\[\left | I(\mathcal{P},\mathcal{H})-\frac{|\mathcal{P}||\mathcal{H}|}{q^{(k+1)(n-h)}}\right | \lesssim q^{\frac{k(2h-n-k+1)+h(n-h)}{2}}\sqrt{|\mathcal{P}||\mathcal{H}|} .\]
\end{theorem}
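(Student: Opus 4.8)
The plan is to recast $I(\mathcal{P},\mathcal{H})$ as an edge count in a biregular bipartite graph and then apply the bipartite expander mixing lemma. Let $A$ be the set of all affine $k$-planes in $\mathbb{F}_{q}^{n}$ and $B$ the set of all affine $h$-planes, and let $G$ be the bipartite graph on $A\cup B$ in which $V\in A$ is joined to $W\in B$ exactly when $V\subseteq W$; then $I(\mathcal{P},\mathcal{H})=e_{G}(\mathcal{P},\mathcal{H})$ for $\mathcal{P}\subseteq A$, $\mathcal{H}\subseteq B$. First I would record the elementary Gaussian-binomial counts: $|A|=q^{n-k}\binom{n}{k}_{q}$ and $|B|=q^{n-h}\binom{n}{h}_{q}$; an affine $k$-plane $V=U+v$ is contained in exactly $d_{A}=\binom{n-k}{h-k}_{q}$ affine $h$-planes (each must pass through $v$ and have direction space containing $U$); and each affine $h$-plane contains exactly $d_{B}=q^{h-k}\binom{h}{k}_{q}$ affine $k$-planes. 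The identity $\binom{n}{k}_{q}\binom{n-k}{h-k}_{q}=\binom{n}{h}_{q}\binom{h}{k}_{q}$ confirms $|A|d_{A}=|B|d_{B}=|E(G)|$ and gives $d_{A}/|B|=q^{-(k+1)(n-h)}\bigl(1+O(q^{-1})\bigr)$, so replacing $d_{A}/|B|$ by $q^{-(k+1)(n-h)}$ in the main term introduces only an $O(q^{-1})$ relative error, which a short computation (using $|\mathcal{P}|\le|A|$, $|\mathcal{H}|\le|B|$) shows is absorbed by the claimed error term.

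The bipartite expander mixing lemma then gives $\bigl|e_{G}(\mathcal{P},\mathcal{H})-\tfrac{d_{A}}{|B|}|\mathcal{P}||\mathcal{H}|\bigr|\le\sigma_{2}\sqrt{|\mathcal{P}||\mathcal{H}|}$, where $\sigma_{2}$ is the second-largest singular value of the biadjacency matrix $N$ of $G$; equivalently $\sigma_{2}^{2}$ is the second-largest eigenvalue of the symmetric matrix $M:=NN^{\top}$. Here $M$ is indexed by affine $k$-planes and $M_{V,V'}=\#\{\text{affine }h\text{-planes}\supseteq V\text{ and }V'\}=\binom{n-s}{h-s}_{q}$, where $s=s(V,V')$ is the dimension of the affine span of $V\cup V'$, and $s$ runs over $k,k+1,\dots,\min(2k+1,n)$ (with $s=k$ only when $V=V'$). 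Since $h\ge 2k+1$, every entry of $M$ is strictly positive, so by Perron--Frobenius the trivial eigenvalue $\sigma_{1}^{2}=d_{A}d_{B}$ is the unique largest one, and the whole theorem reduces to proving $\sigma_{2}\lesssim q^{[k(2h-n-k+1)+h(n-h)]/2}$. The matrix $M$ commutes with the transitive action of the affine group $\mathrm{AGL}_{n}(\mathbb{F}_{q})$ on $A$ and depends only on relative position, hence lies in the adjacency algebra of the orbitals of this action, whose spectrum is classically known; concretely, I would compute the eigenvalues of $M$ by Fourier analysis over the translation subgroup $\mathbb{F}_{q}^{n}$, which block-diagonalizes $M$ according to the additive characters $\psi_{\xi}$ and reduces each block to a character-twisted version of the (explicitly known) spectrum of the linear subspace-inclusion operator. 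Extracting the largest eigenvalue strictly below $d_{A}d_{B}$ gives one with exponent $k(2h-n-k+1)+h(n-h)$, exactly twice the exponent claimed for the error term.

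The main obstacle is this eigenvalue computation: one has to determine which eigenspace carries the second-largest eigenvalue of $M$ and evaluate it exactly, keeping careful track of the many $q$-binomial factors coming from subspace combinatorics together with the character sums over affine subspaces that come from the translation part. The hypothesis $h\ge 2k+1$ is needed precisely here --- it makes $M$ entrywise positive, so that $d_{A}d_{B}$ is forced to be the Perron eigenvalue and the remaining eigenvalues are ordered as expected; for smaller $h$ some orbital relations are absent, $M$ collapses to a combination of fewer basis matrices, and a separate argument on a narrower parameter range would be required. Once $\sigma_{2}$ is in hand, combining it with the first-step reconciliation of the main terms finishes the proof.
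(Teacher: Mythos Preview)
The paper does not contain a proof of this theorem at all: Theorem~\ref{ptv} is quoted from \cite{PTV} as background and motivation, and the present paper only proves its own analogue (Theorem~\ref{main}) for $\mathrm{dot}_{k}$-subspaces. So there is no ``paper's own proof'' to compare against here.

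That said, your outline is exactly the template the paper itself follows for Theorem~\ref{main}, and it is also the method of \cite{PTV}: build the bipartite containment graph, apply the expander mixing lemma (Lemma~\ref{spectral}), and reduce to bounding the second eigenvalue of $NN^{T}$ by writing it as a combination of orbital adjacency matrices indexed by the possible relative positions of two $k$-planes (cf.\ the decomposition in~\eqref{eq} and Lemmas~\ref{mc}--\ref{bt}). Your degree and main-term computations are correct, and your identification of the role of the hypothesis $h\ge 2k+1$ (forcing every pair $V,V'$ to have nonzero entry in $M$, so that the Perron eigenvalue is isolated) matches the structural role that $h\ge 4k-4$ plays in the paper's own argument.

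The only substantive gap in your proposal is the one you flag yourself: you do not actually carry out the eigenvalue computation, only assert that the spectrum of the orbital algebra is ``classically known'' and that the subleading eigenvalue has the right exponent. In \cite{PTV} (and in the present paper's Theorem~\ref{third}) this is done not by an exact diagonalisation via Fourier analysis as you suggest, but by the cruder and more robust device of subtracting off a multiple of $J$, using $Jv_{3}=0$ from Lemma~\ref{eigen}, and then bounding the remaining sum of orbital matrices by their row sums (maximal degrees). Your Fourier-analytic route would in principle give sharper constants, but it is more work than is needed for the $\lesssim$ statement; the row-sum bound is what actually appears in both \cite{PTV} and this paper.
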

\noindent More history of the improvements for the incidence problem over finite fields can be found in \cite{PTV}. 

\smallskip
In this paper, motivated by their work, we obtain bounds on the number of incidences between dot$_{k}$-subspaces and dot$_{h}$-subspaces. Our error term is less than the one in Theorem \ref{ptv}. Recall that any nondegenerate quadratic form on a vector space of dimension $n$ over a finite field is equivalent to one of the following two (see \cite{Co}):
\begin{align*}
\text{dot}_n(\mathbf{x}) & :=x_{1}^{2}+\cdots+x_{n-1}^{2}+x_{n}^{2} \quad \text{or}\\
\text{$\lambda$dot$_n$}(\mathbf{x}) & :=x_{1}^{2}+\cdots+x_{n-1}^{2}+\lambda x_{n}^{2} \quad \text{for some nonsqaure $\lambda$ in $\mathbb{F}_{q}$}.
\end{align*} 
When there is no danger of confusion, we will let dot$_{n}$ and $\lambda$dot$_{n}$ denote the quadratic spaces
$(\mathbb{F}_q^n, \text{dot}_n(\mathbf{x}))$ and $(\mathbb{F}_q^n, \lambda$dot$_n(\mathbf{x}))$ respectively. By the classification, possible $k$-dimensional quadratic subspaces of $(\mathbb{F}_{q}^{n},\text{dot}_{n}(\mathbf{x}))$ are following:
\begin{align*}
&\text{dot}_{k},\text{dot}_{k-1}\oplus 0,\cdots,\text{dot}_{1}\oplus 0^{k-1}\\
&\lambda\text{dot}_{k},\lambda\text{dot}_{k-1}\oplus 0,\cdots,\lambda\text{dot}_{1}\oplus 0^{k-1}\\
&0^{k}
\end{align*}
It is not true in general that all those types always exist in the subspaces of $(\mathbb{F}_{q}^{n},\text{dot}_{n}(\mathbf{x}))$. The existence of possible quadratic subspaces of $(\mathbb{F}_{q}^{n},\text{dot}_{n}(\mathbf{x}))$ can in found in \cite{Yo1}. Among such many types of quadratic subspaces of $(\mathbb{F}_{q}^{n},\text{dot}_{n}(\mathbf{x}))$, we are mainly interested in one type of quadratic subspaces of $(\mathbb{F}_{q}^{n},\text{dot}_{n}(\mathbf{x}))$, which is dot$_{k}$. A quadratic subspace $(W,Q)$ of $(\mathbb{F}_{q}^{n},\text{dot}_{n}(\mathbf{x}))$ is called \textbf{dot$_{\mathbf{k}}$-subspace} if $Q$ is isometrically isomorphic to dot$_{k}(\mathbf{x})$. For later work, we also define a $\bm{\lambda}$\textbf{dot}$_{\mathbf{k}}$\textbf{-subspace} $(W,Q)$ if $Q$ is isometrically isomorphic to $x_{1}^{2}+\cdots+x_{k-1}^{2}+\lambda x_{k}^{2}$ for some nonsquare $\lambda$.

\smallskip
It turns out that the number of dot$_{k}$-subspaces has an interesting combinatorial description which can be describe as the form of the analogue of binomial coefficients. For more information, see \cite{Yo1}. Let us denote it by $\binom{n}{k}_{d}$. Also, in \cite{Yo2}, the author studies the graph coming from the set of dot$_{k}$-subspaces and the adjacency of edges is made by orthogonality between dot$_{k}$-subspaces. See \cite{Yo2}.

\smallskip
The statement of our main theorem is the following:
\begin{theorem}\label{main}
Let $\mathcal{K}$ be a collection of dot$_{k}$-subspaces and $\mathcal{H}$ be a collection of dot$_{h}$-subspaces in $(\mathbb{F}_{q}^{d},\text{dot}_{n})$, $1<k<h$ and $h \geq 4k-4$. Then we have
\[\left | I(\mathcal{K},\mathcal{H})-\frac{|\mathcal{K}||\mathcal{H}|}{q^{k(n-h)}}\right |  \lesssim q^{\frac{k(2h-n-2k+4)+h(n-h-1)-2}{2}}\sqrt{|\mathcal{K}||\mathcal{H}|} .\]
\end{theorem}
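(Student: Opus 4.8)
The plan is to follow the spectral-graph-theoretic strategy of \cite{PTV}: encode incidences between dot$_k$-subspaces and dot$_h$-subspaces as the edges of a bipartite graph, show this graph is a good spectral expander (equivalently, its non-trivial eigenvalues are small), and then apply the expander mixing lemma to pass from the global count to the incidence bound for arbitrary subcollections $\mathcal{K}$ and $\mathcal{H}$. Concretely, let $G$ be the bipartite graph whose two vertex classes are the set of all dot$_k$-subspaces (of size $\binom{n}{k}_d$) and the set of all dot$_h$-subspaces, with an edge joining $W$ to $W'$ exactly when $W \subseteq W'$. The main term $|\mathcal{K}||\mathcal{H}|/q^{k(n-h)}$ should be $(\deg/N_h)|\mathcal{K}||\mathcal{H}|$ where $\deg$ is the number of dot$_h$-subspaces containing a fixed dot$_k$-subspace; so the first step is a counting lemma: inside $(\mathbb{F}_q^d,\mathrm{dot}_n)$, the number of dot$_h$-subspaces containing a fixed dot$_k$-subspace $W$ is independent of $W$ (by Witt's theorem, since any two dot$_k$-subspaces are in the same orbit of the orthogonal group) and equals, after dividing by the total number of dot$_h$-subspaces, a quantity asymptotic to $q^{-k(n-h)}$. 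This reduces the theorem to bounding the second-largest eigenvalue $\lambda$ of the (bi-adjacency operator of the) graph $G$, via the standard estimate $\bigl|I(\mathcal{K},\mathcal{H}) - \tfrac{\deg}{N_h}|\mathcal{K}||\mathcal{H}|\bigr| \le \lambda \sqrt{|\mathcal{K}||\mathcal{H}|}$.

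The eigenvalue estimate is where the real work lies, and I would obtain it by exploiting the transitive group action of $O_n(\mathbb{F}_q) = O(\mathbb{F}_q^d,\mathrm{dot}_n)$ on each vertex class. Because the relation ``$W \subseteq W'$'' is invariant under this action and the action is (generously) transitive on dot$_k$-subspaces and on dot$_h$-subspaces, $G$ is a bi-regular graph arising from a commutative association scheme (or a quotient thereof), so its eigenvalues can in principle be computed, or at least bounded, representation-theoretically. The cleaner route, and the one I expect the author takes, is to bound $\lambda$ by relating $G$ to the known spectral gap of the corresponding graph in \cite{PTV} for affine/linear subspaces: a dot$_k$-subspace is in particular a $k$-dimensional linear subspace, so $G$ embeds as an induced subgraph of (a regularized version of) the $k$-plane/$h$-plane incidence graph. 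One would then combine the eigenvalue bound coming from \cite{PTV} with an interlacing/restriction argument to control the eigenvalues of the smaller graph, and carefully track how the extra algebraic constraint (being \emph{isometric} to $\mathrm{dot}_k$, not merely a subspace) shrinks the degree and hence improves the normalized error. The hypothesis $h \ge 4k-4$ (compared to $h \ge 2k+1$ in \cite{PTV}) is presumably exactly the range in which the relevant counting quantities — the number of dot$_k$-subspaces of a fixed dot$_h$-space, and the number of common dot$_h$-superspaces of two distinct dot$_k$-subspaces — behave regularly enough for the eigenvalue bound to take the stated clean form; verifying this regularity is the crux.

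Assembling the pieces, the steps in order are: (1) set up the bipartite incidence graph $G$ and record that it is bi-regular, with the two degrees computed by Witt-type orbit counting in $(\mathbb{F}_q^d,\mathrm{dot}_n)$; (2) identify the trivial eigenvalue and the associated main term $|\mathcal{K}||\mathcal{H}|/q^{k(n-h)}$, checking that the degree ratio is indeed $\asymp q^{-k(n-h)}$ in the stated range of $h$; (3) bound the second eigenvalue $\lambda$ of $G$ by $\lambda \lesssim q^{(k(2h-n-2k+4)+h(n-h-1)-2)/2}$, either via the association-scheme structure coming from the $O_n$-action or by comparison/interlacing with the $k$-plane incidence graph of \cite{PTV} together with the refined degree count; (4) invoke the expander mixing lemma for bipartite bi-regular graphs to conclude. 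The main obstacle is step (3): getting the eigenvalue bound with the \emph{precise} exponent in the statement, which requires a careful double count of pairs $(W_1,W_2)$ of dot$_k$-subspaces lying in a common dot$_h$-subspace, handled according to the isometry type of $W_1 \cap W_2$ and of $W_1 + W_2$; the condition $h \ge 4k-4$ should be what guarantees that the dominant contribution comes from the generic case and that the non-generic intersection types contribute lower-order terms, so that the second-moment (and hence the eigenvalue) bound is as sharp as claimed.
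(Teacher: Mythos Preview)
Your framework---biregular bipartite incidence graph, Witt's theorem for transitivity and the degree count, then the expander mixing lemma---matches the paper exactly for steps (1), (2), and (4). The divergence is in step (3).

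The route you flag as ``cleaner'' and expect the author to take, namely embedding $G$ in the $k$-plane/$h$-plane graph of \cite{PTV} and using interlacing, does not deliver the improvement: Cauchy interlacing bounds the induced subgraph's nontrivial eigenvalues by those of the parent, so you would recover at best the \cite{PTV} exponent, not the strictly smaller one claimed here. (There is also a mismatch: the \cite{PTV} graph is on \emph{affine} planes.) The association-scheme route is plausible but not what the paper does either. What the paper actually does is the direct second-moment computation you describe only in your final paragraph as the ``main obstacle'': it writes
\[
NN^{T} \;=\; aI \;+\; \sum_{t=k+1}^{2k} b_{t}\,E_{t},
\]
where $E_{t}$ is the $0/1$ matrix on pairs $(K,K')$ of dot$_{k}$-subspaces with $\dim(K+K')=t$ and $b_{t}\asymp q^{(h-t)(n-h)}$ counts common dot$_{h}$-superspaces (Lemma~\ref{bt}). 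It then subtracts the $t=2k$ multiple of the all-ones matrix $J$ (which annihilates the relevant eigenvector, Lemma~\ref{eigen}) and bounds each remaining $E_{t}$ crudely by its maximum row sum (Lemma~\ref{mc}). Maximizing the resulting exponent, a downward parabola $-2t^{2}+(4k+h)t+\text{const}$, over $k+1\le t\le 2k-1$ is exactly where $h\ge 4k-4$ enters: it forces the vertex to sit at or beyond $t=2k-1$, so the maximum is at the right endpoint. So your closing instinct is the correct mechanism; make it the primary argument rather than a fallback.
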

It is easy to check that sets $\mathcal{K},\mathcal{H}$ satisfying the inequality are nonempty. Furthermore, it follows from Theorem \ref{main} that 
\begin{corollary} With the same notation in Theorem \ref{main}, 
\begin{enumerate}
    \item If $|\mathcal{K}||\mathcal{H}|  \gtrsim q^{k(2h-n-2k+4)+h(n-h-1)-2} q^{2k(n-h)}$, then $I(\mathcal{K},\mathcal{H})$ is nonempty.
    \item If $|\mathcal{K}||\mathcal{H}|  \gg q^{k(2h-n-2k+4)+h(n-h-1)-2}q^{2k(n-h)}$, then $I(\mathcal{K},\mathcal{H})$ is close to the main term $|\mathcal{K}||\mathcal{H}|/q^{k(n-h)}$,
\end{enumerate}
where $x \gg y$ means $y=o(x)$.
\end{corollary}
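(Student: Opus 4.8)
The plan is to deduce both items directly from the two-sided bound furnished by Theorem \ref{main}, treating it as a black box; no new spectral input is needed. Write $X := |\mathcal{K}||\mathcal{H}|$ for the product of the cardinalities, $M := X/q^{k(n-h)}$ for the main term, and $\Delta := q^{\frac{k(2h-n-2k+4)+h(n-h-1)-2}{2}}$, so that Theorem \ref{main} asserts the existence of an absolute constant $c$ with $|I(\mathcal{K},\mathcal{H}) - M| \leq c\,\Delta\sqrt{X}$, equivalently $M - c\Delta\sqrt{X} \leq I(\mathcal{K},\mathcal{H}) \leq M + c\Delta\sqrt{X}$. The first bookkeeping observation I would record is that the threshold appearing in both items factors through $\Delta$: since $\Delta^2 = q^{k(2h-n-2k+4)+h(n-h-1)-2}$, the quantity $q^{k(2h-n-2k+4)+h(n-h-1)-2}q^{2k(n-h)}$ equals exactly $\Delta^2 q^{2k(n-h)}$. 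Denote this threshold by $T := \Delta^2 q^{2k(n-h)}$.

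For item (1), I would show that the hypothesis $X \gtrsim T$ forces the main term to dominate the error. Indeed, the inequality $M > c\Delta\sqrt{X}$ is equivalent, after multiplying by $q^{k(n-h)}/\sqrt{X} > 0$, to $\sqrt{X} > c\,\Delta\,q^{k(n-h)}$, i.e.\ to $X > c^2 \Delta^2 q^{2k(n-h)} = c^2 T$. Taking the implied constant in the hypothesis $X \gtrsim T$ to be at least $c^2$ (this is precisely the role of $\gtrsim$), we obtain $M > c\Delta\sqrt{X}$, whence the lower bound from Theorem \ref{main} gives $I(\mathcal{K},\mathcal{H}) \geq M - c\Delta\sqrt{X} > 0$. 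Since $I(\mathcal{K},\mathcal{H})$ is a nonnegative integer, this yields $I(\mathcal{K},\mathcal{H}) \geq 1$, so the incidence set is nonempty.

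For item (2), I would estimate the relative error. Dividing the bound $|I(\mathcal{K},\mathcal{H}) - M| \leq c\Delta\sqrt{X}$ by $M = X/q^{k(n-h)}$ gives
\[ \frac{|I(\mathcal{K},\mathcal{H}) - M|}{M} \;\leq\; \frac{c\,\Delta\,q^{k(n-h)}}{\sqrt{X}} \;=\; c\sqrt{\frac{\Delta^2 q^{2k(n-h)}}{X}} \;=\; c\sqrt{\frac{T}{X}}. \]
Under the stronger hypothesis $X \gg T$, i.e.\ $T = o(X)$, the right-hand side tends to $0$, so $I(\mathcal{K},\mathcal{H}) = M\,(1 + o(1))$; this is the precise sense in which $I(\mathcal{K},\mathcal{H})$ is close to its main term $|\mathcal{K}||\mathcal{H}|/q^{k(n-h)}$.

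There is no genuine obstacle beyond the exponent bookkeeping in the opening paragraph: the entire content is an algebraic rearrangement of the inequality in Theorem \ref{main}, with item (1) being the positivity of $M - c\Delta\sqrt{X}$ and item (2) being the decay of the relative error $c\Delta\sqrt{X}/M = c\sqrt{T/X}$. The one subtlety worth flagging is that the implied constant in the $\gtrsim$ of item (1) must be chosen compatibly with the constant $c$ from Theorem \ref{main} (namely at least $c^2$), which is exactly why the hypothesis is phrased with $\gtrsim$ rather than with a bare inequality.
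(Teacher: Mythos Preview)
Your argument is correct and is exactly the intended one: the paper does not give a separate proof of the corollary, simply noting that it ``follows from Theorem~\ref{main},'' and your write-up supplies precisely the routine algebraic rearrangement that justifies this. The only cosmetic point is that to get the strict inequality $M>c\Delta\sqrt{X}$ in item~(1) you should take the implied constant strictly larger than $c^{2}$ (or absorb it into the asymptotic notation), but this is harmless at the level of $\gtrsim$.
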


The main difficulty of this work comes from the types of quadratic subspaces. For example, the intersection between dot$_{k}$-subspaces may not be a dot$_{k}$-subspace again. When we bound the third eigenvalue of the adjacency matrix of the incidence graph made by dot$_{k}$-subspaces and dot$_{h}$-subspaces, we avoid this difficulty by defining a certain graph using sums of vector subspaces of $\mathbb{F}_{q}^{n}$ to disregard the types of degenerate quadratic subspaces of $(\mathbb{F}_{q}^{n},\text{dot}_{n}(\mathbf{x}))$, and use some results from \cite{Yo1}.

\smallskip
\subsection*{Acknowledgements.} 
The author would like to express gratitude to Jonathan Pakianathan for helpful discussions and encouragement for this work.

\medskip
\section{Preliminaries}
\smallskip

In this section, we remind our reader of some facts coming from the theory of quadratic forms, spectral graph theory that we will need later. First, we introduce the following two equivalent fundamental facts in the theory of quadratic forms. In this paper, we will collectively call any of them Witt's theorem. The proofs can be found in \cite{Cl}.

\begin{theorem}[Witt's extension theorem]\label{we}\cite{Cl}
Let $X_{1},X_{2}$ be quadratic spaces and $X_{1} \cong X_{2}$, $X_{1}=U_{1} \oplus V_{1},X_{2}=U_{2}\oplus V_{2}$, and $f:V_{1} \longrightarrow V_{2}$ an isometry. Then there is an isomtery $F:X_{1}\longrightarrow X_{2}$ such that $F|_{V_{1}}=f$ and $F(U_{1})=U_{2}$.
\end{theorem}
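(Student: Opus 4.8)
The plan is to derive this ``two-space'' form of Witt's theorem from the single-space extension statement: every isometry between subspaces of a fixed \emph{nondegenerate} quadratic space extends to an isometry of the whole space. Throughout I work over $\mathbb{F}_q$ with $q$ odd, so that $2$ is invertible and the symmetric bilinear form $B(x,y)=\tfrac12\big(Q(x+y)-Q(x)-Q(y)\big)$ satisfies $B(x,x)=Q(x)$; this is exactly the setting in which hyperplane reflections are available, and they will be the engine of the proof.

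First I would perform the reduction. Since $X_1\cong X_2$, fix an ambient isometry $g\colon X_1\to X_2$. Then $g|_{V_1}\colon V_1\to g(V_1)$ and $f\colon V_1\to V_2$ are two isometries of $V_1$ onto subspaces of $X_2$, so $\sigma:=f\circ(g|_{V_1})^{-1}\colon g(V_1)\to V_2$ is an isometry between two subspaces of the \emph{single} space $X_2$. Granting the single-space extension theorem, $\sigma$ extends to some $\Phi\in O(X_2)$, and I set $F:=\Phi\circ g\colon X_1\to X_2$. For $v\in V_1$ we get $F(v)=\Phi(g(v))=\sigma(g(v))=f(v)$, so $F|_{V_1}=f$. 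Finally, because $X_1=U_1\oplus V_1$ and $X_2=U_2\oplus V_2$ are orthogonal and the spaces are nondegenerate, a dimension count gives $U_1=V_1^{\perp}$ and $U_2=V_2^{\perp}$; since isometries preserve orthogonal complements and $F(V_1)=V_2$, we conclude $F(U_1)=F(V_1^{\perp})=F(V_1)^{\perp}=V_2^{\perp}=U_2$, as required.

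It remains to prove the single-space statement, which I would do by induction on $\dim V_1$, building the extension as a product of reflections $\tau_u(x)=x-\frac{2B(x,u)}{B(u,u)}u$ (defined for anisotropic $u$). Suppose first that $V_1$ contains an anisotropic vector $v$, and set $w=\sigma(v)$, so $Q(v)=Q(w)\neq 0$. The identity $Q(v-w)+Q(v+w)=4Q(v)\neq 0$ forces at least one of $v-w,\,v+w$ to be anisotropic; a direct computation shows that $\tau_{v-w}$ sends $v\mapsto w$ in the first case, while $\tau_w\circ\tau_{v+w}$ sends $v\mapsto w$ in the second. Composing $\sigma$ with the inverse of such a product, I may assume $\sigma$ fixes $v$; then $\sigma$ restricts to an isometry of $V_1\cap\langle v\rangle^{\perp}$ inside the nondegenerate hyperplane $\langle v\rangle^{\perp}$, and the induction hypothesis applies there. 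This settles every case in which $V_1$ is not totally isotropic.

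The delicate point, and the main obstacle, is the totally isotropic case: when the restriction of $Q$ to $V_1$ is identically zero there is no anisotropic vector to peel off, and reflections alone do not directly carry one isotropic vector to another. The standard remedy, which I would carry out, is to enlarge an isotropic $v\in V_1$ to a hyperbolic pair $\langle v,v'\rangle$ with $Q(v)=Q(v')=0$ and $B(v,v')=1$ (possible since $X_2$ is nondegenerate), to do the same for $w=\sigma(v)$, and to extend $\sigma$ first on this hyperbolic plane by the evident isometry $v\mapsto w,\ v'\mapsto w'$; one then continues the induction on the nondegenerate orthogonal complement $\langle v,v'\rangle^{\perp}$, matching it to $\langle w,w'\rangle^{\perp}$ by the cancellation that is built into this induction. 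The odd-characteristic hypothesis is precisely what guarantees both the reflection formula above and the splitting of nondegenerate spaces into anisotropic lines and hyperbolic planes, and so it underlies every step.
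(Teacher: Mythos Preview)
The paper does not give its own proof of this theorem: it is stated with a citation to \cite{Cl}, and the sentence immediately preceding it says ``The proofs can be found in \cite{Cl}.'' So there is nothing in the paper to compare your argument against.

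That said, your argument is the standard one and is essentially what one finds in Clark's notes: reduce to the single-space extension problem via an ambient isometry $g$, then build the extension inductively as a product of hyperplane reflections, peeling off an anisotropic vector at each step and handling the totally isotropic case by adjoining a hyperbolic partner. The reduction step and the use of $U_i=V_i^{\perp}$ (which does require $X_i$ nondegenerate, implicit in the context) are clean.

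One point you should tighten in the totally isotropic step: when you extend $\sigma$ by declaring $v'\mapsto w'$, it is not enough that $B(v,v')=1=B(w,w')$ and $Q(v')=0=Q(w')$. For the extended map on $V_1+\langle v'\rangle$ to be an isometry you also need $B(u,v')=B(\sigma(u),w')$ for \emph{every} $u\in V_1$, not only for $u=v$. The usual fix is to choose $v'$ with $B(v,v')=1$ and $v'\perp$ a chosen complement of $\langle v\rangle$ inside $V_1$ (possible since the ambient form is nondegenerate), then correct to $Q(v')=0$ by subtracting a multiple of $v$; do the same on the target side. With that adjustment your induction goes through.
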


\begin{theorem}[Witt's cancellation theorem]\label{wc}\cite{Cl}
Let $U_{1},U_{2},V_{1},V_{2}$ be quadratic spaces where $V_{1}$ and $V_{2}$ are isometrically isomorphic. If $U_{1}\oplus V_{1} \cong U_{2} \oplus V_{2}$, then $U_{1} \cong U_{2}$.
\end{theorem}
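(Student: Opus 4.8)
The plan is to deduce the cancellation theorem from Witt's extension theorem (Theorem \ref{we}), which is already at hand, and then to indicate the self-contained argument underlying it. For the deduction, I would apply Theorem \ref{we} to the two given decompositions $X_{1}=U_{1}\oplus V_{1}$ and $X_{2}=U_{2}\oplus V_{2}$, together with the hypotheses $X_{1}\cong X_{2}$ and the isometry $f:V_{1}\to V_{2}$. The extension theorem then produces an isometry $F:X_{1}\to X_{2}$ with $F|_{V_{1}}=f$ and $F(U_{1})=U_{2}$. Since the sums are orthogonal, the quadratic form on each $U_{i}$ is the restriction of the form on $X_{i}$; hence $F|_{U_{1}}:U_{1}\to U_{2}$ is a linear isomorphism preserving the quadratic form, i.e.\ an isometry, and so $U_{1}\cong U_{2}$. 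This reduces the whole statement to Theorem \ref{we}, consistent with the two being equivalent.

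To make the argument self-contained I would instead prove cancellation directly by induction on $\dim V_{1}=\dim V_{2}$, cancelling one dimension at a time. Choosing an orthogonal splitting $V_{1}=\langle v_{1}\rangle\oplus V_{1}'$ and setting $v_{2}:=f(v_{1})$, $V_{2}':=f(V_{1}')$ so that $V_{2}=\langle v_{2}\rangle\oplus V_{2}'$, it suffices to treat the one-dimensional case $V_{i}=\langle v_{i}\rangle$, where $Q(v_{1})=Q(v_{2})=a$ because $f$ is an isometry. Fixing an isometry $g:X_{1}\to X_{2}$ realizing $X_{1}\cong X_{2}$, the vector $g(v_{1})\in X_{2}$ satisfies $Q(g(v_{1}))=a=Q(v_{2})$, so the heart of the matter is the following: \emph{two vectors of equal nonzero norm in a quadratic space are carried one to the other by an isometry of that space.} Granting this, an isometry $\sigma$ of $X_{2}$ with $\sigma(g(v_{1}))=v_{2}$ gives an isometry $\sigma g:X_{1}\to X_{2}$ sending $v_{1}$ to $v_{2}$, hence sending $v_{1}^{\perp}=U_{1}$ to $v_{2}^{\perp}=U_{2}$; restriction then yields $U_{1}\cong U_{2}$.

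The vector-transitivity lemma is where reflections enter, and here I would use that $q$ is odd (characteristic $\neq 2$), so that $Q$ and its polarization $B$ determine each other. For anisotropic $w$ the reflection $\tau_{w}(x)=x-\tfrac{2B(x,w)}{Q(w)}\,w$ is an isometry. Given $v_{1},v_{2}$ with $Q(v_{1})=Q(v_{2})=a\neq 0$, one computes $B(v_{1}-v_{2},\,v_{1}+v_{2})=Q(v_{1})-Q(v_{2})=0$ and $Q(v_{1}-v_{2})+Q(v_{1}+v_{2})=4a\neq 0$, so at least one of $v_{1}\pm v_{2}$ is anisotropic: if $Q(v_{1}-v_{2})\neq 0$ then $\tau_{v_{1}-v_{2}}(v_{1})=v_{2}$ directly, while if $Q(v_{1}+v_{2})\neq 0$ then $\tau_{v_{1}+v_{2}}(v_{1})=-v_{2}$ and composing with $\tau_{v_{2}}$ corrects the sign. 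The main obstacle is the treatment of possibly degenerate quadratic spaces, which do occur in this paper: when $a=0$ the vectors are isotropic, the reflection is unavailable, and $v_{i}^{\perp}$ need not be a complement of $\langle v_{i}\rangle$. I would resolve this by first splitting each space as $(\text{radical})\oplus(\text{nondegenerate part})$; the radical dimension is an invariant, so $V_{1}\cong V_{2}$ forces equal radical dimensions and reduces the cancellation to the nondegenerate summands, where the reflection argument applies. Verifying that this splitting is compatible with the orthogonal sums $U_{i}\oplus V_{i}$ is the delicate bookkeeping step.
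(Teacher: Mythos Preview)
The paper does not supply its own proof of Theorem~\ref{wc}: it merely states the result alongside Theorem~\ref{we} and refers the reader to \cite{Cl} for proofs, remarking that the two statements are equivalent. Consequently there is nothing in the paper to compare your argument against at the level of proof detail.

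That said, your proposal is sound and in fact carries out both directions the paper alludes to. Your first paragraph is exactly the one-line deduction of cancellation from the extension theorem as formulated in Theorem~\ref{we}, and is correct as written. Your second and third paragraphs reproduce the standard reflection proof found in references such as \cite{Cl}: reducing to cancelling a single anisotropic line, then using that at least one of $v_{1}\pm v_{2}$ is anisotropic to build a reflection (or product of two reflections) carrying $v_{1}$ to $v_{2}$. Your handling of the possibly degenerate case by stripping off radicals first is the right move and completes the argument; the bookkeeping you flag is routine since the radical of an orthogonal sum is the orthogonal sum of the radicals. For the purposes of this paper the first paragraph alone already suffices, since Theorem~\ref{we} is taken as given.
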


We also rely on the following theorem from spectral graph theory, called spectral gap theorem or expanding mixing lemma. \begin{lemma}\cite{PTV} \label{spectral}
Suppose that $G$ is biregular with parts $A,B$ s.t deg$(A)=a$, deg$(B)=b$. Let $X \subset A$, $Y \subset B$ and $e(X,Y)$ be the number of incidences between $X$ and $Y$. Then we have
\[\left | I(X,Y)-\frac{a}{|B|}|X||Y| \right | \leq \lambda_{3} \sqrt{|X||Y|},\]
where $\lambda_{3}$ is the third eigenvalue of the adjacency matrix of $G$.
\end{lemma}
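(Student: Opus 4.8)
The plan is to prove this by spectral decomposition of the adjacency matrix $M$ of $G$, the standard route for expander mixing lemmas, adapted to the bipartite biregular setting. First I would record the block form
\[M = \begin{pmatrix} 0 & N \\ N^{T} & 0 \end{pmatrix},\]
where $N$ is the $|A|\times|B|$ biadjacency matrix, together with the degree relations $N\mathbf{1}_{B} = a\,\mathbf{1}_{A}$ and $N^{T}\mathbf{1}_{A} = b\,\mathbf{1}_{B}$, and the edge-counting identity $a|A| = b|B|$ obtained by summing degrees on each side. Writing $\chi_{X},\chi_{Y}$ for the indicator vectors of $X$ and $Y$, regarded inside $\mathbb{R}^{A\cup B}$ and supported on $A$ and $B$ respectively, the object to be estimated is exactly the bilinear form $I(X,Y) = \chi_{X}^{T} M \chi_{Y}$.

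Next I would identify the two extremal eigenvectors. Since $G$ is bipartite its spectrum is symmetric about $0$, and testing vectors of the form $(\alpha\,\mathbf{1}_{A},\,\beta\,\mathbf{1}_{B})$ against $M$ using the degree relations shows that $\pm\sqrt{ab}$ are eigenvalues; the operator-norm bound $\|N\|_{\mathrm{op}}^{2}\le ab$ confirms that $\sqrt{ab}$ is the maximal modulus. Orthonormal eigenvectors are
\[v_{1} = \Bigl(\tfrac{1}{\sqrt{2|A|}}\mathbf{1}_{A},\ \tfrac{1}{\sqrt{2|B|}}\mathbf{1}_{B}\Bigr), \qquad v_{2} = \Bigl(\tfrac{1}{\sqrt{2|A|}}\mathbf{1}_{A},\ -\tfrac{1}{\sqrt{2|B|}}\mathbf{1}_{B}\Bigr),\]
where the normalization again uses $a|A| = b|B|$. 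I would complete these to an orthonormal eigenbasis $v_{1},v_{2},v_{3},\dots$, ordered by modulus, with $\lambda_{1}=\sqrt{ab}$, $\lambda_{2}=-\sqrt{ab}$ the two of maximal modulus and $\lambda_{3}=\max_{i\ge 3}|\lambda_{i}|$; this is precisely the ``third eigenvalue'' of the statement.

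The computation then expands $\chi_{X}=\sum_{i}\alpha_{i}v_{i}$ and $\chi_{Y}=\sum_{i}\beta_{i}v_{i}$, giving $I(X,Y)=\sum_{i}\lambda_{i}\alpha_{i}\beta_{i}$. A direct inner-product calculation yields $\alpha_{1}=\alpha_{2}=|X|/\sqrt{2|A|}$ and $\beta_{1}=-\beta_{2}=|Y|/\sqrt{2|B|}$, so the $i=1,2$ terms contribute $\lambda_{1}\alpha_{1}\beta_{1}+\lambda_{2}\alpha_{2}\beta_{2}=\sqrt{ab}\,|X||Y|/\sqrt{|A||B|}$, which collapses to $\tfrac{a}{|B|}|X||Y|$ after invoking $a|A|=b|B|$ once more. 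For the remaining terms I would estimate
\[\Bigl|\sum_{i\ge 3}\lambda_{i}\alpha_{i}\beta_{i}\Bigr| \le \lambda_{3}\sum_{i\ge 3}|\alpha_{i}\beta_{i}| \le \lambda_{3}\Bigl(\sum_{i}\alpha_{i}^{2}\Bigr)^{1/2}\Bigl(\sum_{i}\beta_{i}^{2}\Bigr)^{1/2} = \lambda_{3}\,\|\chi_{X}\|\,\|\chi_{Y}\| = \lambda_{3}\sqrt{|X||Y|}\]
by Cauchy--Schwarz and Parseval, which is exactly the asserted bound.

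The step I expect to demand the most care is the bookkeeping for the main term. Unlike the familiar $d$-regular non-bipartite expander mixing lemma, a biregular bipartite graph has \emph{two} extremal eigenvectors $v_{1},v_{2}$ at $\pm\sqrt{ab}$ rather than a single one, and it is their combined contribution, not that of $v_{1}$ alone, that produces $\tfrac{a}{|B|}|X||Y|$. Confirming that the normalizations together with $a|A|=b|B|$ conspire to turn $\sqrt{ab}/\sqrt{|A||B|}$ into $a/|B|$ is the one place where a slip would silently corrupt the main term, so I would verify this identity explicitly before collecting the estimates.
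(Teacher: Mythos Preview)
Your proof is correct and follows essentially the same spectral-decomposition route as the paper's sketch: identify the block form of $A(G)$, find the two extremal eigenvectors at $\pm\sqrt{ab}$ spanned by $\mathbf{1}_A,\mathbf{1}_B$, separate off their contribution as the main term $\tfrac{a}{|B|}|X||Y|$, and bound the remainder via Cauchy--Schwarz against $\lambda_3$. Your write-up is in fact more explicit than the paper's, which defers the detailed computation to \cite{PTV}; the only cosmetic difference is that the paper phrases the error bound in terms of the projections $\overline{1_X},\overline{1_Y}$ onto the orthogonal complement of $\mathrm{span}(1_A,1_B)$ rather than the tail of the eigenbasis expansion, but these are the same thing.
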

The rough sketch of the proof of Lemma \ref{spectral} is following. We follow \cite{PTV}. Let $A(G)$ be the adjacency matrix of $G$. Then it is written by
\[A(G)=\begin{pmatrix}
0 & N\\ 
N^{T} & 0
\end{pmatrix},\]
where $N_{ij}=1$ if there is an edge between $i$ and $j$. Let us label the eigenvalues satisfying $|\lambda_{1}| \geq |\lambda_{2}|\geq \cdots \geq |\lambda_{n}|$. Note that the number of walks of length $2$ in $G$ is $ab$. Thus $A^{2}(G)$ is regular and row sums of $A^{2}(G)$ are $ab$. This means that $ab$ is an eigenvalue of $A^{2}(G)$ corresponding to the vector with entries all $1$. By Perron-Frobenius Theorem (See \cite{GR}), we have $\lambda_{1}=\sqrt{ab}$ and $\lambda_{2}=-\sqrt{ab}$. We can also check $\sqrt{a}1_{A}\pm \sqrt{b}1_{B}$ are eigenvectors with the eigenvalues $\pm \sqrt{ab}$ respectively, where $1_{A}$ means the column vector which has $1$s in the positions corresponding to $A$ and $0$s otherwise. Let $W^{\perp}$ be the vector space spanned by $1_{A}$ and $1_{B}$. Since eigenvectors are orthogonal by the fact that $A(G)$ is symmetric, the other eigenvectors except for $1_{A}$ and $1_{B}$ span $W$. Thus $Ku=0$ for any $u$ in $W$, where 
\[K=\begin{pmatrix}
0 & J\\ 
J^{T} & 0
\end{pmatrix},\] where $J$ is the $|A|\times |B|$ matrix with entries all $1$. Also, the biggest eigenvalue corresponding to other eigenvectors is $\lambda_{3}$. Thus we have $||A(G)(1_{A})|| \leq \lambda_{3} ||1_{A}||$ and $||A(G)(1_{B})|| \leq \lambda_{3} ||1_{B}||$. By use this inequality, we can show that
\[\left | I(X,Y)-\frac{a}{|B|}|X||Y| \right | \leq \lambda_{3}||\overline{1_{X}}||||\overline{1_{Y}}|| ,\]
where $\overline{1_{X}}$ and $\overline{1_{Y}}$ are projection of $1_{X}$ and $1_{Y}$ onto $W$. For more details, see \cite{PTV}. It is not hard to show $||\overline{1_{X}}||||\overline{1_{Y}}|| \leq \sqrt{|X||Y|}$. This completes the proof.

\smallskip
Let us consider $A^{2}(G)$. Then we have
\[A^{2}(G)=\begin{pmatrix}
NN^{T} & 0\\ 
0 & N^{T}N
\end{pmatrix}.\]
Suppose that $v_{3}=(v_{1},\cdots,v_{m},u_{1},\cdots,u_{n})^{T}$ be an eigenvector of $A(G)$ corresponding to the eigenvalue $\lambda_{3}$. It is easy to check that $v_{3}$ is an eigenvector of $A^{2}(G)$ with the eigenvalue $\lambda_{3}^{2}$ and thus $(v_{1},\cdots,v_{m})^{T}$ is an eigenvector of $NN^{T}$ with the eigenvalue $\lambda_{3}^{2}$. By the proof of Theorem \ref{spectral}, $Kv_{3}=0$. This implies $J(v_{1},\cdots,v_{m})^{T}=0$. Therefore, we summarize our discussion in the following Lemma.
\begin{lemma}\cite{PTV}\label{eigen}
Let $G$ be a biregular graph with parts $A,B$ with $|A|=m,|B|=n$. Let $A(G)$ be the adjacency matrix of $G$ given by $A(G)=\bigl(\begin{smallmatrix}
0 & N\\ 
N^{T} & 0
\end{smallmatrix}\bigr)$. Let $v_{3}=( v_{1},\cdots,v_{m},u_{1},\cdots,u_{n} )^{T}$ be an eigenvector of $A(G)$ to $\lambda_{3}$. Then we have
\begin{enumerate}
    \item $(v_{1},\cdots,v_{m})^{T}$ is an eigenvector of $NN^{T}$,
    \item $J(v_{1},\cdots,v_{m})^{T}=0$.
\end{enumerate}
\end{lemma}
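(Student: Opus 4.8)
The plan is to read both assertions off the single eigenvalue relation $A(G)v_{3}=\lambda_{3}v_{3}$. First I would compute the square of the adjacency matrix,
\[A^{2}(G)=\begin{pmatrix}0 & N\\ N^{T} & 0\end{pmatrix}\begin{pmatrix}0 & N\\ N^{T} & 0\end{pmatrix}=\begin{pmatrix}NN^{T} & 0\\ 0 & N^{T}N\end{pmatrix},\]
which is block diagonal. Writing $w=(v_{1},\dots,v_{m})^{T}$ and $u=(u_{1},\dots,u_{n})^{T}$ so that $v_{3}=(w,u)^{T}$, squaring the eigenvalue relation gives $A^{2}(G)v_{3}=\lambda_{3}^{2}v_{3}$, and comparing the top block yields $NN^{T}w=\lambda_{3}^{2}w$. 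Hence $w$ satisfies the eigenvector equation for $NN^{T}$ with eigenvalue $\lambda_{3}^{2}$, which is claim (1); it is genuinely nonzero whenever $\lambda_{3}\neq 0$, since $w=0$ would force $Nu=0$ and $\lambda_{3}u=0$, hence $v_{3}=0$.

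For claim (2) I would locate $v_{3}$ inside the subspace $W$ introduced in the proof of Lemma \ref{spectral}. There the extreme eigenvalues $\lambda_{1}=\sqrt{ab}$ and $\lambda_{2}=-\sqrt{ab}$ are realized by the explicit eigenvectors $\sqrt{a}\,1_{A}\pm\sqrt{b}\,1_{B}$, which span $W^{\perp}=\operatorname{span}\{1_{A},1_{B}\}$, and one has $Kv=0$ for every $v\in W$, where $K=\bigl(\begin{smallmatrix}0 & J\\ J^{T} & 0\end{smallmatrix}\bigr)$. Since $A(G)$ is symmetric, its eigenvectors attached to distinct eigenvalues are orthogonal; as $\lambda_{3}$ is the third eigenvalue we have $|\lambda_{3}|<\sqrt{ab}$, so $v_{3}$ is orthogonal to both leading eigenvectors and therefore lies in $W$. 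Consequently $Kv_{3}=0$, and expanding
\[Kv_{3}=\begin{pmatrix}0 & J\\ J^{T} & 0\end{pmatrix}\begin{pmatrix}w\\ u\end{pmatrix}=\begin{pmatrix}Ju\\ J^{T}w\end{pmatrix}=0\]
gives $J^{T}w=0$ (together with $Ju=0$). Because every entry of $J$ equals $1$, each coordinate of this product equals $\sum_{i=1}^{m}v_{i}$, so the relation is exactly the assertion $J(v_{1},\dots,v_{m})^{T}=0$ of claim (2).

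The matrix computations are routine; the one delicate point is the orthogonality used in the second paragraph, namely that $v_{3}\in W$. This rests on the strict inequality $|\lambda_{3}|<\sqrt{ab}$, so that $v_{3}$ is orthogonal to the $\pm\sqrt{ab}$-eigenspaces. The cleanest way to guarantee it is to invoke connectivity of $G$, whence Perron--Frobenius makes the top eigenvalue $\sqrt{ab}$ simple and $W^{\perp}$ exactly two-dimensional; if $\sqrt{ab}$ were allowed higher multiplicity one would instead have to argue that the eigenvector for $\lambda_{3}$ can be chosen orthogonal to $1_{A}$ and $1_{B}$ within its eigenspace. This is the step I would state most carefully.
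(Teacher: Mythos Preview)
Your argument is correct and follows the same route as the paper: compute $A^{2}(G)$ in block form to obtain $NN^{T}w=\lambda_{3}^{2}w$, then invoke the proof of Lemma~\ref{spectral} to place $v_{3}$ in $W$ and conclude $Kv_{3}=0$, reading off the vanishing of the all-ones product on $w$. You are in fact more careful than the paper on two points it leaves implicit---the nonvanishing of $w$ needed for it to be a genuine eigenvector, and the role of connectivity (Perron--Frobenius) in ensuring $|\lambda_{3}|<\sqrt{ab}$ so that $v_{3}\in W$.
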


\smallskip
\section{Incidence graph coming from quadratic spaces.}
Let us define an incidence graph $G=(A \cup B, E)$, where $A$ is the set of dot$_{k}$-subspaces, $B$ is the set of dot$_{h}$-subspaces and $k < h$. For any dot$_{k}$-subspace $K$ in $A$ and dot$_{h}$-subspaces $H$ in $B$, there is an edge between them if $K \subset H$. By section 2 and 3 of \cite{Yo1}, the number of dot$_{k}$-subspaces of dot$_{n}$, denoted by $\binom{ n}{k}_{d}$, is given as follows:
\[\binom{n}{k}_{d}=(1+o(1))\frac{q^{k(n-k)}}{2}.\] 
Therefore, the sizes of $A$ and $B$ are shown to be
\[|A|=(1+o(1))\frac{q^{k(n-k)}}{2} ~\text{ and }~ |B|=(1+o(1))\frac{q^{h(n-h)}}{2}\]
respectively. Furthermore, we mention the number of $\lambda$dot$_{k}$-subspaces for later work. It is obtained by 
\[\binom{n}{\lambda k}_{d}=(1+o(1))\frac{q^{k(n-k)}}{2}.\]
Let $A(G)$ be the adjacency matrix of $G$. Since $G$ is bipartite, $A(A)$ is written by following:
\[A(G)=\begin{pmatrix}
0 & N \\ 
N^{T} & 0
\end{pmatrix},\]
where $N$ is a $q^{k(n-k)} \times q^{h(n-h)}$ matrix which is $1$ if $K \subset H$ and $0$ otherwise. By Witt's theorem, any vertex in $A$ has the same degree. Similarly, so does $B$. Thus $G$ is a biregular graph. We now count the degree of a dot$_{k}$-subspace in $G$.
\begin{lemma}\label{dega}
Let $n$ and $h$ be positive integers. Then the number of dot$_{h}$-subspaces containing a fixed dot$_{k}$ subspace is given by
\[\binom{n-k}{h-k}_{d}=\frac{1}{2}q^{(h-k)(n-h)}.\]
\end{lemma}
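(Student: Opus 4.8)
The plan is to reduce the count to the known formula for the number of dot-subspaces by passing to an orthogonal complement. Fix a dot$_k$-subspace $K$ of $(\mathbb{F}_q^n,\text{dot}_n)$; by Witt's extension theorem (Theorem \ref{we}) the count is the same for every choice of $K$, so this is legitimate. Since $K$ is nondegenerate we have an orthogonal decomposition $\mathbb{F}_q^n = K \oplus K^{\perp}$, so that $K \oplus K^{\perp} \cong \text{dot}_n$. Because $K \cong \text{dot}_k$ and $\text{dot}_k \oplus \text{dot}_{n-k} \cong \text{dot}_n$, Witt's cancellation theorem (Theorem \ref{wc}) forces $K^{\perp} \cong \text{dot}_{n-k}$; that is, $K^{\perp}$ is itself isometric to $\text{dot}_{n-k}$.

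Next I would set up a bijection between the dot$_h$-subspaces $H$ with $K \subseteq H$ and the dot$_{h-k}$-subspaces $W$ of $K^{\perp}$. Given such an $H$, the restriction of $\text{dot}_n$ to $H$ is nondegenerate (as $H \cong \text{dot}_h$) and $K$ is a nondegenerate subspace of $H$, so $H = K \oplus (H \cap K^{\perp})$ orthogonally; set $W := H \cap K^{\perp} \subseteq K^{\perp}$. Applying Witt cancellation to $\text{dot}_k \oplus W \cong H \cong \text{dot}_h \cong \text{dot}_k \oplus \text{dot}_{h-k}$ gives $W \cong \text{dot}_{h-k}$, so $W$ is a dot$_{h-k}$-subspace of $K^{\perp}$. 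Conversely, for any dot$_{h-k}$-subspace $W$ of $K^{\perp}$, the space $H := K \oplus W$ satisfies $K \subseteq H$ and $H \cong \text{dot}_k \oplus \text{dot}_{h-k} \cong \text{dot}_h$, hence is a dot$_h$-subspace; and $H \cap K^{\perp} = W$ since $K \cap K^{\perp} = 0$. The two maps are mutually inverse, so the degree of $K$ equals the number of dot$_{h-k}$-subspaces of $\text{dot}_{n-k}$, namely $\binom{n-k}{h-k}_{d}$.

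Finally I would invoke the formula from \cite{Yo1} recalled above, $\binom{m}{j}_{d} = (1+o(1))\frac{q^{j(m-j)}}{2}$, with $m = n-k$ and $j = h-k$, to obtain
\[\binom{n-k}{h-k}_{d} = (1+o(1))\frac{q^{(h-k)\big((n-k)-(h-k)\big)}}{2} = \frac{1}{2}q^{(h-k)(n-h)},\]
which is the claimed value.

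The main obstacle is the structural step $H = K \oplus (H \cap K^{\perp})$ — i.e.\ that a nondegenerate subspace of a quadratic space splits off as an orthogonal direct summand — together with the bookkeeping that guarantees $H \cap K^{\perp}$ is genuinely a dot$_{h-k}$-subspace rather than a $\lambda$dot$_{h-k}$-subspace or a degenerate space. Witt's cancellation is exactly what excludes the bad cases, so the argument depends on applying it in the correct order; everything else is routine once this decomposition is in hand.
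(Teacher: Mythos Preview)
Your proof is correct and follows essentially the same route as the paper: fix $K$ by Witt's theorem, pass to the orthogonal complement $K^{\perp}\cong\text{dot}_{n-k}$, and set up the bijection $H\leftrightarrow H\cap K^{\perp}$ between dot$_h$-subspaces containing $K$ and dot$_{h-k}$-subspaces of $K^{\perp}$. The paper's version is much terser (and in fact has a couple of evident typos in the indices), whereas you spell out the Witt-cancellation steps that pin down the isometry types of $K^{\perp}$ and $H\cap K^{\perp}$; but the underlying argument is the same.
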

\begin{proof}
Let $K$ be a dot$_{k}$-subspace of dot$_{n}$. By Witt's theorem, we may choose any fixed dot$_{k}$-subspace. Let $\mathcal{V}$ be the set of dot$_{n-k}$-subspaces in dot$_{n}/K$ and $\mathcal{W}$ be the set of dot$_{k}$-subspace in dot$_{n}$ containing $K$. Define a function $f:\mathcal{V} \longrightarrow \mathcal{W}$ given by $W  \mapsto  W \oplus K$. It is not hard to show that $f$ is bijective. 
\end{proof}
Next, we consider the square of the adjacency matrix $A^{2}(G)$ given by
\[A^{2}(G)=\begin{pmatrix}
NN^{T} & 0\\ 
0 & N^{T}N
\end{pmatrix} .\]
Elements of $A^{2}(G)$ consist of two types of walks of length $2$: 
\begin{itemize}
    \item $a$:= the number of walks of length $2$ with the same starting and the end point,
    \item $b$:= the number of walks of length $2$ with the starting and the end point being different.
\end{itemize}
Let us define a new graph $E_{t}$ for each $t$ such that $k+1 \leq t \leq 2k \leq n$. The vertex set of $E_{t}$ is the set of dot$_{k}$-subspaces, and there is an edge between two distinct dot$_{k}$-subspaces $K,K'$ if $K+K'\simeq \mathbb{F}_{q}^{t}$. The adjacency matrix of $E_{t}$ is shown to be 
\[(E_{t})_{KK'}=\begin{cases}
1 & \text{ if } K+K'\simeq \mathbb{F}_{q}^{t} \\ 
0 & \text{otherwise} 
\end{cases},\]
where $+$ is a sum of subspaces of $\mathbb{F}_{q}^{t}$. Then we have
\begin{equation}\label{eq}
NN^{T}=aI+\sum_{t=k+1}^{2k}b_{t}E_{t}=\frac{1}{2}q^{(h-k)(n-h)}I+\sum_{t=k+1}^{2k}b_{t}E_{t},
\end{equation}
where $b_{k+1}+\cdots+b_{2k}=b$. 

\begin{lemma}\label{mc}
Let two distinct dot$_{k}$-subspaces $K$ and $K'$ be given. For each $t$ such that $k+1 \leq t \leq 2k$, the maximal degree of $(E_{t})_{KK'}$ happens when $K+K'\simeq (\mathbb{F}_{q}^{t},\text{dot}_{t})$ or $K+K'\simeq (\mathbb{F}_{q}^{t},\lambda \text{dot}_{t})$, and the degree of $(E_{t})_{KK'}$ is $\frac{1}{2}q^{(t-k)(n+2k-2t)}$ if $K+K'\simeq (\mathbb{F}_{q}^{t},\text{dot}_{t})$ or $K+K'\simeq (\mathbb{F}_{q}^{t},\lambda \text{dot}_{t})$.
\end{lemma}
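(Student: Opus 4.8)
The plan is to fix the subspace $K$ once and for all — Witt's extension theorem (Theorem~\ref{we}) makes the orthogonal group of $(\mathbb{F}_q^n,\mathrm{dot}_n)$ transitive on $\mathrm{dot}_k$-subspaces — and then, for each isometry type of the sum $K+K'$, to count the $\mathrm{dot}_k$-subspaces $K'$ realizing it; the lemma is the computation of the maximum of this count over types. A pair $K,K'$ with $\dim(K+K')=t$ is determined by the superspace $U:=K+K'$ (a $t$-dimensional subspace containing $K$) together with the intersection $C:=K\cap K'$ (a $(2k-t)$-dimensional subspace of $K$), so I would organize the count in stages: choose $U$ of the prescribed type; choose $C\subseteq K$; assemble $K'$ inside $U$. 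Each of these is a count of subspaces of prescribed type in a fixed $\mathrm{dot}$- or $\lambda\mathrm{dot}$-space (or in an orthogonal complement of one), hence is governed by the estimates $\binom{m}{j}_{d}=(1+o(1))\tfrac{1}{2}q^{j(m-j)}$, $\binom{m}{\lambda j}_{d}=(1+o(1))\tfrac{1}{2}q^{j(m-j)}$ and $\binom{m}{j}_{q}=(1+o(1))q^{j(m-j)}$ from \cite{Yo1}.

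The nondegenerate case is the transparent one. If $U$ is nondegenerate and contains the nondegenerate subspace $K$, then, as in the proof of Lemma~\ref{dega}, $U=K\perp(U\cap K^{\perp})$, and by Witt's cancellation theorem (Theorem~\ref{wc}) the type of $U$ is that of its $(t-k)$-dimensional summand $U\cap K^{\perp}$, which is a nondegenerate subspace of $K^{\perp}\cong\mathrm{dot}_{n-k}$. Thus there are $\binom{n-k}{t-k}_{d}=(1+o(1))\tfrac{1}{2}q^{(t-k)(n-t)}$ superspaces $U\cong(\mathbb{F}_q^t,\mathrm{dot}_t)$, and as many with $U\cong(\mathbb{F}_q^t,\lambda\mathrm{dot}_t)$; multiplying by the $\binom{k}{2k-t}_{q}=(1+o(1))q^{(t-k)(2k-t)}$ choices of $C$, the exponents add to $(t-k)(n-t)+(t-k)(2k-t)=(t-k)(n+2k-2t)$ with leading constant $\tfrac{1}{2}$, which is the value in the statement; the remaining stage of fitting $K'$ into $U$ consistently with the chosen $U$ and $C$ is bounded bookkeeping.

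The main obstacle is the degenerate case — precisely the ``types of quadratic subspaces'' difficulty flagged in the introduction. When $K+K'$ has a radical $R$ of dimension $r\ge1$, one has $R\subseteq C\subseteq K$ with $R$ a totally isotropic $r$-dimensional subspace of $K\cong\mathrm{dot}_k$, and $U$ no longer splits off $K$ orthogonally, so the first stage must be redone: using Theorem~\ref{we} to place $R$ in standard position, one counts the $t$-dimensional superspaces of $K$ of the given degenerate type by invoking the counts of degenerate quadratic subspaces and of totally isotropic subspaces of a $\mathrm{dot}$-space recorded in \cite{Yo1}. The crucial point to extract is that these counts fall short of the nondegenerate count of the same dimension by a positive power of $q$ that grows with $r$, so that every degenerate type of $K+K'$ contributes of strictly smaller order than $q^{(t-k)(n+2k-2t)}$, uniformly for $k+1\le t\le 2k$ and over admissible $r$. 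Granting this monotonicity in the size of the radical, the maximum over isometry types is attained exactly at $\mathrm{dot}_t$ and $\lambda\mathrm{dot}_t$, with the value computed above — which is the assertion of the lemma.
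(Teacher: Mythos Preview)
Your approach matches the paper's: both fix $K$, count the admissible $K'$ via the pair $(K+K',\,K\cap K')$ using the asymptotics $\binom{m}{j}_d\sim\tfrac12 q^{j(m-j)}$ from \cite{Yo1}, and dismiss degenerate sum-types as lower order. The only differences are cosmetic---the paper works with the orthogonal complement $H=U\cap K^{\perp}$ rather than $U$ itself, restricts $C=K\cap K'$ to be nondegenerate (splitting into $\mathrm{dot}_{2k-t}$ and $\lambda\mathrm{dot}_{2k-t}$ cases that then re-sum to your Gaussian binomial $\binom{k}{2k-t}_q$), and disposes of the degenerate case in one line by citing the density fact $\binom{n}{k}_d/\binom{n}{k}_q\to\tfrac12$ from \cite{Yo1} rather than tracking the radical as you sketch.
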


\begin{proof}
Fix a dot$_{k}$-subspace $K$ and we count the number of $K'$ satisfying $K + K' \simeq (\mathbb{F}_{q}^{t},Q)$ for a fixed $Q$. First, suppose that $Q=\text{dot}_{t}$. Since $K$ is nondegenerate and $k$-dimensional, such a $K'$ is equivalent to choosing a $(t-k)$-dimensional nondegenerate quadratic subspaces $H$ such that $K \oplus  H=(\mathbb{F}_{q}^{t},\text{dot}_{t})$ as a quadratic subspace, and choosing a $(2k-t)$-dimensional subspace of $K$ as $K \cap K'$.

\smallskip
(1) The number of choices of dot$_{t-k}$-subspaces for $H$ is $\binom{n-k}{t-k}_{d}=\frac{1}{2}q^{(t-k)(n-t)}$, and the number of choices of dot$_{2k-t}$-subspace in $K$ is $\binom{k}{2k-t}_{d}=\frac{1}{2}q^{(2k-t)(t-k)}$. Thus, we have
\[\binom{n-k}{t-k}_{d}\binom{k}{2k-t}_{d}=\frac{1}{4}q^{(t-k)(n-t)}q^{(2k-t)(t-k)}=\frac{1}{4}q^{(t-k)(n+2k-2t)}.\]
(2) The number of choices of $\lambda$dot$_{t-k}$-subspaces for $H$ is $\binom{n-k}{\lambda (t-k)}_{d}=\frac{1}{2}q^{(t-k)(n-t)}$, and the number of choices of $\lambda$dot$_{2k-t}$-subspace in $K$ is $\binom{k}{\lambda(2k-t)}_{d}=\frac{1}{2}q^{(2k-t)(t-k)}$. Thus, we have
\[\binom{n-k}{\lambda(t-k)}_{d}\binom{k}{\lambda(2k-t)}_{d}=\frac{1}{4}q^{(t-k)(n-t)}q^{(2k-t)(t-k)}=\frac{1}{4}q^{(t-k)(n+2k-2t)}.\]
By (1) and (2), the number of $K'$ such that $K+K'\simeq (\mathbb{F}_{q}^{t},\text{dot}_{t})$ is
\begin{align*}
\binom{n-k}{t-k}_{d}\binom{k}{2k-t}_{d}+\binom{n-k}{\lambda(t-k)}_{d}\binom{k}{\lambda(2k-t)}_{d}&=\frac{1}{4}q^{(t-k)(n+2k-2t)}+\frac{1}{4}q^{(t-k)(n+2k-2t)}\\
&=\frac{1}{2}q^{(t-k)(n+2k-2t)}.
\end{align*}
Next, suppose that $Q=\lambda \text{dot}_{t}$. Similar way with the case of $Q=\text{dot}_{t}$, the number of $K'$ such that $K+K'\simeq (\mathbb{F}_{q}^{t},\lambda \text{dot}_{t})$ is
\begin{align*}
\binom{n-k}{\lambda(t-k)}_{d}\binom{k}{2k-t}_{d}+\binom{n-k}{t-k}_{d}\binom{k}{\lambda(2k-t)}_{d}&=\frac{1}{4}q^{(t-k)(n+2k-2t)}+\frac{1}{4}q^{(t-k)(n+2k-2t)}\\
&=\frac{1}{2}q^{(t-k)(n+2k-2t)}.
\end{align*}
Assume that $Q$ is degenerate. By \cite{Yo1}, most of the types of quadratic subspaces are nondegenerate since
\[\lim_{q\rightarrow \infty}\frac{\binom{n}{k}_{d}}{\binom{n}{k}_{q}}=\frac{1}{2},~~~\text{and also }~~\lim_{q\rightarrow \infty}\frac{\binom{n}{\lambda k}_{d}}{\binom{n}{k}_{q}}=\frac{1}{2}.\]
Here $\binom{n}{k}_{q}$ is the number of $k$-dimensional subspaces of $\mathbb{F}_{q}^{n}$. Therefore, this completes the proof.
\end{proof}

\begin{lemma}\label{bt}
Let $t$ be a nonnegative integer such that $k+1 \leq t \leq 2k \leq n$. Then the number of dot$_{h}$-subspaces containing two fixed dot$_{k}$-subspaces $K$ and $K'$ such that $K+K'\simeq (\mathbb{F}_{q}^{t},\text{dot}_{t})$ or $K+K'\simeq (\mathbb{F}_{q}^{t},\lambda \text{dot}_{t})$ is 
\[b_{t}=\frac{1}{2}q^{(h-t)(n-h)}.\]
\end{lemma}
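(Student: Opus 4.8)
The plan is to reduce the count to Witt's theorem together with the known enumeration results for dot- and $\lambda$dot-subspaces. First I would note that a dot$_{h}$-subspace $H$ contains both $K$ and $K'$ exactly when it contains the quadratic subspace $W:=K+K'$, which by hypothesis is a $t$-dimensional \emph{nondegenerate} subspace of $(\mathbb{F}_{q}^{n},\text{dot}_{n})$, isometrically isomorphic either to $(\mathbb{F}_{q}^{t},\text{dot}_{t})$ or to $(\mathbb{F}_{q}^{t},\lambda\text{dot}_{t})$. Applying Witt's extension theorem (Theorem \ref{we}) with $X_{1}=X_{2}=(\mathbb{F}_{q}^{n},\text{dot}_{n})$, a copy of $W$ as $V_{1},V_{2}$ and $W^{\perp}$ as $U_{1},U_{2}$, any two subspaces of $\mathbb{F}_{q}^{n}$ isometric to a common quadratic space lie in one orbit of the isometry group of $\text{dot}_{n}$; hence the number of dot$_{h}$-subspaces containing $W$ depends only on the isometry type of $W$, and I will denote it $b_{t}$ in either case. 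It remains, for each fixed type $\tau\in\{\text{dot}_{t},\lambda\text{dot}_{t}\}$, to count the dot$_{h}$-subspaces that contain one fixed subspace of type $\tau$.

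I would do this by double-counting the set of pairs $(V,H)$ in which $V$ is a type-$\tau$ subspace of $(\mathbb{F}_{q}^{n},\text{dot}_{n})$ and $H$ is a dot$_{h}$-subspace with $V\subseteq H$. Counting by $V$ gives $b_{t}$ times the number of type-$\tau$ subspaces of $\mathbb{F}_{q}^{n}$, i.e.\ $b_{t}\binom{n}{t}_{d}$ when $\tau=\text{dot}_{t}$ and $b_{t}\binom{n}{\lambda t}_{d}$ when $\tau=\lambda\text{dot}_{t}$. Counting by $H$: again by Witt's theorem the number of type-$\tau$ subspaces inside a dot$_{h}$-subspace is independent of the chosen dot$_{h}$-subspace and equals the corresponding count inside the standard space $(\mathbb{F}_{q}^{h},\text{dot}_{h})$, namely $\binom{h}{t}_{d}$ or $\binom{h}{\lambda t}_{d}$, so the pair count is $\binom{n}{h}_{d}\binom{h}{t}_{d}$ (resp.\ with the $\lambda$-versions). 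Equating the two gives
\[
b_{t}=\frac{\binom{n}{h}_{d}\binom{h}{t}_{d}}{\binom{n}{t}_{d}}
\qquad\text{or}\qquad
b_{t}=\frac{\binom{n}{h}_{d}\binom{h}{\lambda t}_{d}}{\binom{n}{\lambda t}_{d}} .
\]
Plugging in the estimates $\binom{N}{r}_{d}=(1+o(1))\tfrac{1}{2}q^{r(N-r)}$ and $\binom{N}{\lambda r}_{d}=(1+o(1))\tfrac{1}{2}q^{r(N-r)}$ recalled in Section 3, and simplifying the exponent via $h(n-h)+t(h-t)-t(n-t)=(h-t)(n-h)$, both right-hand sides collapse to $(1+o(1))\tfrac{1}{2}q^{(h-t)(n-h)}$, so $b_{t}$ depends only on $t$ and not on which of the two nondegenerate types $K+K'$ has.

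As an alternative that mirrors the proof of Lemma \ref{dega}, one can instead argue by an explicit bijection: since $W$ is nondegenerate, $\mathbb{F}_{q}^{n}=W\oplus W^{\perp}$ with $W^{\perp}$ nondegenerate of dimension $n-t$, and comparing discriminants with Witt's cancellation theorem (Theorem \ref{wc}) gives $W^{\perp}\simeq\text{dot}_{n-t}$ when $W\simeq\text{dot}_{t}$ and $W^{\perp}\simeq\lambda\text{dot}_{n-t}$ when $W\simeq\lambda\text{dot}_{t}$; then $H\mapsto H\cap W^{\perp}$ is a bijection from the dot$_{h}$-subspaces containing $W$ onto the $(h-t)$-dimensional subspaces $W'$ of $W^{\perp}$ with $W\oplus W'\simeq\text{dot}_{h}$, with inverse $W'\mapsto W\oplus W'$, and Witt's cancellation forces $W'\simeq\text{dot}_{h-t}$ (resp.\ $W'\simeq\lambda\text{dot}_{h-t}$). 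This yields $b_{t}=\binom{n-t}{h-t}_{d}=\tfrac{1}{2}q^{(h-t)(n-h)}$ in the first case (literally Lemma \ref{dega} with $k$ replaced by $t$) and $b_{t}=\#\{\lambda\text{dot}_{h-t}\text{-subspaces of }\lambda\text{dot}_{n-t}\}$ in the second, the latter again equal to $(1+o(1))\tfrac{1}{2}q^{(h-t)(n-h)}$ by the double count above (or by \cite{Yo1}). The only real friction I foresee is the bookkeeping of isometry types in the $\lambda\text{dot}_{t}$ case---pinning down the types of $W^{\perp}$ and of $W'$, and then checking that the two cases genuinely agree to leading order---which is exactly where the relations $\binom{N}{\lambda r}_{d}\sim\binom{N}{r}_{d}$ from \cite{Yo1} are used. (The degenerate endpoint $t=h$, where $b_{t}$ ought to be read as $1$ or $0$ according to the type of $W$ rather than as $\tfrac12$, lies outside the range relevant to Theorem \ref{main} and is harmless.)
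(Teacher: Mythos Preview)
Your proposal is correct, and in fact your ``alternative'' argument is precisely the paper's proof: the paper simply records that when $K+K'\simeq\text{dot}_{t}$ the count is $\binom{n-t}{h-t}_{d}=\tfrac12 q^{(h-t)(n-h)}$, and when $K+K'\simeq\lambda\text{dot}_{t}$ it is $\binom{n-t}{\lambda(h-t)}_{d}=\tfrac12 q^{(h-t)(n-h)}$, with no further commentary. Your write-up is more careful than the paper's in tracking the isometry type of $W^{\perp}$ and of $H\cap W^{\perp}$ in the $\lambda$ case.

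Your primary argument via double counting pairs $(V,H)$ is a genuinely different route. It trades the explicit bijection $H\mapsto H\cap W^{\perp}$ for the identity $b_{t}\binom{n}{t}_{d}=\binom{n}{h}_{d}\binom{h}{t}_{d}$ (and its $\lambda$-analogue), which has the advantage of avoiding any case analysis on the type of $W^{\perp}$: one never needs to decide whether the orthogonal complement is dot or $\lambda$dot, only that the number of type-$\tau$ subspaces inside a dot$_{h}$-subspace is well-defined by Witt. The cost is that the answer comes out as a ratio of three asymptotic quantities rather than a single one, so the implied constant is slightly less transparent; but since everything in the paper is only claimed up to $(1+o(1))$ this is harmless. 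Either approach is adequate here.
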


\begin{proof}
The number of dot$_{h}$-subspaces containing two fixed dot$_{k}$-subspaces $K$ and $K'$ such that $K+K'\simeq (\mathbb{F}_{q}^{t},\text{dot}_{t})$ is
\[\binom{n-t}{h-t}_{d}=\frac{1}{2}q^{(h-t)(n-h)}.\]
Similarly, the number of dot$_{h}$-subspaces containing two fixed dot$_{k}$-subspaces $K$ and $K'$ such that $K+K'\simeq (\mathbb{F}_{q}^{t},\lambda \text{dot}_{t})$ is
\[\binom{n-t}{\lambda (h-t)}_{d}=\frac{1}{2}q^{(h-t)(n-h)}.\]
\end{proof}

\smallskip
Our next task is to bound the third eigenvalue of $A(G)$. 

\begin{theorem}\label{third}
The third eigenvalue of $A(G)$ is bounded by $\sqrt{\frac{k}{2}}q^{(-2k^{2}+2hk+4k-kn-h+hn-h^{2}-2)/2}$.
\end{theorem}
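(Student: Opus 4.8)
The plan is to combine the decomposition \eqref{eq} with Lemma~\ref{eigen}. Let $v_3=(v_1,\dots,v_m,u_1,\dots,u_n)^T$ be an eigenvector of $A(G)$ for $\lambda_3$; by Lemma~\ref{eigen}, $w:=(v_1,\dots,v_m)^T$ is an eigenvector of $NN^T$ for $\lambda_3^2$ with $Jw=0$, i.e.\ $w$ is orthogonal to the all-ones vector $\mathbf 1$ on the dot$_k$-side. Hence $\lambda_3^2=w^T NN^Tw/\|w\|^2$, and I may subtract any multiple of $J$ from $NN^T$ without changing this quotient. Since distinct dot$_k$-subspaces $K\neq K'$ satisfy $\dim(K+K')=t$ for exactly one $t\in\{k+1,\dots,2k\}$, we have $\sum_{t=k+1}^{2k}E_t=J-I$; subtracting $b_{2k}(J-I)$ from \eqref{eq} and applying the result to $w$ (using $(J-I)w=-w$) yields
\[
w^T NN^T w=(a-b_{2k})\|w\|^2+\sum_{t=k+1}^{2k-1}(b_t-b_{2k})\,w^T E_t w .
\]
This step is essential: it removes the $t=2k$ summand, which is the dominant one, and keeping it would only give the trivial bound $\lambda_3^2\le\lambda_1^2$ (the Perron eigenvalue of $NN^T$, i.e.\ its common row sum).

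Each graph $E_t$ is regular — by Witt's theorem $O_n(q)$ acts transitively on dot$_k$-subspaces and preserves the relation $\dim(K+K')=t$, so all vertices of $E_t$ have a common degree $d_t$ — hence $E_t\mathbf 1=d_t\mathbf 1$ and every eigenvalue of $E_t$ lies in $[-d_t,d_t]$, so $w^T E_t w\le d_t\|w\|^2$. Using $b_t-b_{2k}\ge 0$ (Lemma~\ref{bt} gives $b_t=\tfrac12 q^{(h-t)(n-h)}$, non-increasing in $t$ since $h<n$) I obtain
\[
\lambda_3^2\le (a-b_{2k})+\sum_{t=k+1}^{2k-1}(b_t-b_{2k})d_t\le a+\sum_{t=k+1}^{2k-1}b_t d_t .
\]
By Lemma~\ref{mc}, $d_t\le(1+o(1))\,q^{(t-k)(n+2k-2t)}$ (the two nondegenerate possibilities for the isometry type of $K+K'$ dominate, the degenerate ones being of lower order by \cite{Yo1}), and $a=\tfrac12 q^{(h-k)(n-h)}$ by Lemma~\ref{dega}. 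So, up to the factor $(1+o(1))$, it remains to bound the exponents $(h-k)(n-h)$ and $f(t):=(h-t)(n-h)+(t-k)(n+2k-2t)$ for $k+1\le t\le 2k-1$ by the target exponent $E:=-2k^2+2hk+4k-kn-h+hn-h^2-2$.

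This is where $h\ge 4k-4$ is used. Since $f$ is a downward parabola with $f'(t)=h+4k-4t$, its vertex is at $t^\star=(h+4k)/4$, and $h\ge 4k-4$ says precisely $t^\star\ge 2k-1$; hence $f$ is non-decreasing on $[k+1,2k-1]$ and $\max_{k+1\le t\le2k-1}f(t)=f(2k-1)$. A direct computation gives $f(2k-1)=(h-k)(n-h)+(k-1)(h-2k+2)=E$, and since $h\ge 4k-4\ge 2(k-1)$ we also have $(h-k)(n-h)=E-(k-1)(h-2k+2)\le E$. There are at most $k-1$ terms $b_td_t$, each $\le\tfrac14 q^{E}(1+o(1))$, plus the term $a\le\tfrac12 q^{E}$, so $\lambda_3^2\le\bigl(\tfrac12+\tfrac{k-1}{4}\bigr)q^{E}(1+o(1))\le\tfrac k2 q^{E}$, and therefore $\lambda_3\le\sqrt{k/2}\,q^{E/2}$, as claimed.

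The conceptual difficulty is to organize the argument so that the crude estimate $w^T E_t w\le d_t\|w\|^2$ is already sharp enough — no finer spectral information about the graphs $E_t$ is available or needed — which is exactly why one subtracts $b_{2k}(J-I)$ and why $h\ge 4k-4$ is the relevant threshold: it is precisely the range in which $f$ is monotone on $[k+1,2k-1]$, so the largest surviving term is $b_{2k-1}d_{2k-1}$ and it matches the target. The remaining technical care, which I would handle by refining \eqref{eq} according to the isometry type of $K+K'$ (each piece still regular by Witt's theorem) and invoking \cite{Yo1}, is that \eqref{eq} is exact only up to the degenerate quadratic types, whose contribution must be verified to be of lower order.
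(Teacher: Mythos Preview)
Your proof is correct and follows the same route as the paper: subtract the $b_{2k}$-multiple of $J$ (equivalently of $J-I=\sum_tE_t$) from the decomposition $NN^T=aI+\sum_t b_tE_t$, bound each surviving $E_t$ on the $J$-orthogonal eigenspace by its degree, and use the parabola computation together with $h\ge4k-4$ to identify $t=2k-1$ as the maximizer. You are in fact more explicit than the paper about why $E_t$ is regular (Witt transitivity) and why $h\ge4k-4$ is exactly the threshold that pushes the vertex of $f$ to $\ge2k-1$; the small slip $b_td_t\le\tfrac14q^E$ (with your normalization of $d_t$ it is $\tfrac12q^E$) is harmless, since either constant still yields $\lambda_3^2\le\tfrac{k}{2}q^E$.
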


\begin{proof}
By Lemma \ref{eigen}, it suffices to bound the second eigenvalue of $NN^{T}$. By Lemma \ref{dega} and Lemma \ref{bt}, we have
\[NN^{T}=\frac{1}{2}q^{(h-k)(n-h)}I+\sum_{t=k+1}^{2k}q^{(h-t)(n-h)}E_{t}.\]
To obtain a better bound, we want to rewrite $NN^{T}$ with the matrix $J$.
\[
NN^{T}=\frac{1}{2}q^{(h-2k)(n-h)}J+\frac{1}{2}\left (q^{(h-k)(n-h)}-q^{(h-2k)(n-h)}\right )I+\frac{1}{2}\sum_{t=k+1}^{2k-1}\left (q^{(h-t)(n-h)}-q^{(h-2k)(n-h)}\right )E_{t}.
\]
Let $v_{3}$ be an third eigenvector of $A(G)$ with the eigenvalue $\lambda_{3}$. By Lemma \ref{eigen}, we have 
\[\lambda_{3}^{2}v_{3} = \frac{1}{2}\left (q^{(h-k)(n-h)}-q^{(h-2k)(n-h)}\right )v_{3}+\frac{1}{2}\left (\sum_{t=k+1}^{2k-1}(q^{(h-t)(n-h)}-q^{(h-2k)(n-h)})E_{t}  \right )v_{3}.\]
Thus $v_{3}$ is an eigenvector of 
\[\sum_{t=k+1}^{2k-1}(q^{(h-t)(n-h)}-q^{(h-2k)(n-h)})E_{t}.\]
Now, let us bound $\lambda_{3}^{2}$. Since the sums of eigenvalues are bounded by the sums of the largest eigenvalue, and by Lemma \ref{mc} we have
\begin{align*}
\lambda_{3}^{2} & \leq \frac{1}{2}q^{(h-k)(n-h)}+k\cdot \text{max}_{t,Q}\left | \frac{1}{2}q^{(h-t)(n-h)}E_{t} \right |\\
& \leq \frac{1}{2}q^{(h-k)(n-h)}+\frac{k}{4}\cdot \text{max}_{t} q^{(h-t)(n-h)}q^{(t-k)(n+2k-2t)}\\
&\leq \frac{1}{2}q^{(h-k)(n-h)}+\frac{k}{4}\cdot \text{max}_{t}q^{-2t^{2}+(4k+h)t+hn-h^{2}-kn-2k^{2}}\\
&=\frac{1}{2}q^{(h-k)(n-h)}+\frac{k}{4}q^{-2k^{2}+2hk+4k-kn-h+hn-h^{2}-2}.
\end{align*}
The maximum attains at $t=2k-1$ when $h \geq 4k-4$. If $k>1$, then we have
\[\frac{1}{2}q^{(h-k)(n-h)}\ll \frac{k}{4}q^{-2k^{2}+2hk+4k-kn-h+hn-h^{2}-2}.\]
In other words, we obtain
\[\lambda_{3} \leq \sqrt{\frac{k}{2}}q^{\frac{-2k^{2}+2hk+4k-kn-h+hn-h^{2}-2}{2}}.\]
\end{proof}
Finally, we finish to prove our main theorem.
\begin{proof}[Proof of Theorem \ref{main}]
By Lemma \ref{dega} and the size of $B$, we have
\[\frac{\text{deg}(A)}{|B|}=(1+o(1))q^{k(n-h)}.\]
Hence, Theorem \ref{spectral} and Theorem \ref{third} complete the proof.
\end{proof}

\medskip

\end{document}